\newcommand{\Q}{\mathbb{Q}}   
\newcommand{\F}{\mathbb{F}}   
\newcommand{\K}{\mathbb{K}}
\newcommand{\GL}{\operatorname{GL}}
\newcommand{\GML}{\operatorname{\Gamma L}}
\newcommand{\chr}{\operatorname{char}}
\newcommand{\Gal}{\operatorname{Gal}}
\newtheorem{lause}{Theorem}[section]
\newtheorem{lemma}[lause]{Lemma}
\newtheorem{seur}[lause]{Corollary}
\theoremstyle{remark}
\newtheorem{remark}[lause]{Remark}
\newtheorem*{acknow*}{Acknowledgements}
\begin{document}

\title[Orthogonal representations of finite solvable groups]{Orthogonal irreducible representations of finite solvable groups in odd dimension}

\author{Mikko Korhonen}
\address{SUSTech International Center for Mathematics, Southern University of Science and Technology, \text{Shenzhen} 518055, Guangdong, P. R. China}
\email{korhonen\_mikko@hotmail.com}
\thanks{Support by Shenzhen Science and Technology Program (Grant No. RCBS20210609104420034).}

\date{\today}

\begin{abstract}
We prove that if $G$ is a finite irreducible solvable subgroup of an orthogonal group $O(V,Q)$ with $\dim V$ odd, then $G$ preserves an orthogonal decomposition of $V$ into $1$-spaces. In particular $G$ is monomial. This generalizes a theorem of Rod Gow.
\end{abstract}

\vspace*{-18ex}
\maketitle

\vspace*{-5ex}
\section{Introduction}

In \cite{Gow1975}, Rod Gow proved that if $\chi$ is a real-valued complex irreducible character of odd degree of a finite solvable group $G$, then $\chi$ is induced from a linear character of a subgroup. Furthermore, in this case $\chi$ is afforded by a monomial representation where the non-zero entries of the matrices are $\pm 1$, and $\chi$ is rational-valued.

For characters of odd degree, it is well known that a complex irreducible character of $G$ is real-valued if and only if the corresponding representation admits a non-degenerate $G$-invariant quadratic form \cite[Theorem 23.16]{JamesLiebeck2001}. The purpose of this note is to generalize Gow's theorem to other fields from this point of view. Previously in \cite{Vallejo}, the result of Gow was extended to characters which take values in certain cyclotomic extensions of $\Q$.

Let $V$ be a finite-dimensional vector space over a field $\F$ with $\chr \F \neq 2$. Denote $n = \dim V$. Let $Q: V \rightarrow \F$ be a non-degenerate quadratic form with polarization $b: V \times V \rightarrow \F$. Denote the isometry group of $Q$ by $O(V,Q)$, so $$O(V,Q) = \{ g \in \GL(V) : Q(gv) = Q(v) \text{ for all } v \in V\}.$$ We will prove the following.

\begin{lause}\label{thm:mainthm}
Suppose that $n$ is odd. Let $G \leq O(V, Q)$ be finite irreducible solvable. Then there exists an orthogonal decomposition $$V = W_1 \perp \dots \perp W_n$$ such that $\dim W_i = 1$ for all $1 \leq i \leq n$ and $G$ acts on $\{W_1, \ldots, W_n\}$.
\end{lause}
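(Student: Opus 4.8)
The plan is to argue by induction on $n = \dim V$, the case $n = 1$ being trivial. Throughout I will use Clifford's theorem: for any normal subgroup $N \trianglelefteq G$ the restriction $V|_N$ is semisimple and $G$ permutes its homogeneous components. Note first that if $N$ is a normal $p$-subgroup with $p = \chr \F$, then semisimplicity forces $N$ to act trivially, so faithfulness gives $N = 1$; hence every relevant normal subgroup is built from primes different from $\chr \F$, and the eigenvalues of its elements are genuine roots of unity. The heart of the argument is the following parity observation, which I will isolate as a lemma: if $N \trianglelefteq G$ is a nontrivial $p$-group acting homogeneously on $V$ and $n$ is odd, then $p = 2$ and $N = \langle -\mathrm{id}_V\rangle$.

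For this lemma, write $V|_N \cong W^{\oplus m}$ with $W$ a nontrivial irreducible $\F N$-module. The invariant form identifies $V$ with its dual, so $W \cong W^{*}$; passing to $\overline{\F}$, the absolutely irreducible constituents of $W$ form a single Galois orbit $\mathcal{O}$, each of dimension a power of $p$, and this orbit is closed under $\chi \mapsto \chi^{-1}$. Since $\dim_{\F} V = m\,|\mathcal{O}|\,p^{a}$ is odd, I get $p^a$ and $|\mathcal{O}|$ odd. If $p$ were odd, the central characters on the constituents would have odd order $> 1$, so inversion would act without fixed points on $\mathcal{O}$, forcing $|\mathcal{O}|$ even -- a contradiction. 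Hence $p = 2$; then $p^a$ odd gives $a = 0$, so $N$ is abelian and acts by a character of $2$-power order, and the same fixed-point-free inversion argument rules out order $\geq 4$, leaving the single character of order $2$, i.e. $N = \langle -\mathrm{id}_V\rangle$. The fact that \emph{odd dimension together with orthogonality forces the scalar $-1$} is the main new ingredient, and it is what makes odd dimension special.

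With the lemma in hand I will carry out the induction using that, for solvable $G$, the module $V$ is primitive if and only if it is quasiprimitive, i.e. every normal subgroup acts homogeneously. Suppose first that some normal subgroup $M$ acts non-homogeneously, giving homogeneous components $V = V_1 \oplus \dots \oplus V_k$ with $k \geq 2$ that $G$ permutes transitively. The pairing $b$ restricted to $V_i \times V_j$ is $M$-invariant, hence vanishes unless the two isotypic types are contragredient; as the types are distinct, each $V_i$ has a unique partner $V_{i^{\ast}}$, giving a $G$-equivariant involution $i \mapsto i^{\ast}$ on the blocks. By transitivity either all blocks are self-paired -- so each $V_i$ is non-degenerate and the sum is orthogonal -- or they split into hyperbolic pairs, in which case $n = 2 \cdot (\text{number of pairs}) \cdot \dim V_1$ is even, contradicting the hypothesis. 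Thus $V = V_1 \perp \dots \perp V_k$ is an orthogonal decomposition with each $\dim V_i = n/k$ odd and $< n$; by the Clifford correspondence each $V_i$ is irreducible for its stabiliser $G_i$, so the induction hypothesis refines $V_1$ into orthogonal $1$-spaces permuted by $G_1$, and transporting this across the blocks by coset representatives yields the desired $G$-invariant decomposition of $V$.

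It remains to rule out the quasiprimitive (equivalently, primitive) case for $n > 1$. Here every normal subgroup acts homogeneously, so the lemma applies to the $p$-layers of the Fitting subgroup $F(G)$: for odd $p$ it forces $O_p(G) = 1$, while for $p = 2$ it forces $O_2(G) = \langle -\mathrm{id}_V \rangle$. Hence $F(G) = \langle -\mathrm{id}_V\rangle$, and since $C_G(F(G)) \leq F(G)$ for solvable $G$ while $-\mathrm{id}_V$ is central, we conclude $G = \langle -\mathrm{id}_V\rangle$, which is irreducible only when $n = 1$; this contradicts $n > 1$, so for $n > 1$ we are always in the non-homogeneous case above. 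The steps I expect to need the most care are the verification that the block pairing is governed by contragredience (so that partners are unique) and the citation of the solvable quasiprimitive-equals-primitive theorem; the genuinely decisive point, however, is the parity lemma, where oddness of $n$ is converted into the rigid conclusion $N = \langle -\mathrm{id}_V\rangle$.
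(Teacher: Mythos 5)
Your proof is correct, and while its overall architecture matches the paper's (induction on $n$, a Clifford-type decomposition, and the key observation that odd dimension plus orthogonality forces a homogeneously-acting normal subgroup to be $\langle -I_V\rangle$), both decisive technical steps are carried out differently. First, where you prove the block-pairing statement directly --- $b$ pairs homogeneous components of contragredient type, so by transitivity the components are either pairwise orthogonal or fall into hyperbolic pairs of even total dimension --- the paper simply cites Zalesskii's Clifford-theoretic dichotomy (either $V \downarrow N$ is homogeneous, or $V = W \oplus W^*$ with $W$ homogeneous). Second, and more substantially, the endgames in the homogeneous (quasiprimitive) case differ: the paper, following Gow, only ever needs \emph{abelian} normal subgroups, because once every nontrivial abelian normal subgroup is known to equal $\langle -I_V \rangle$, it places such a subgroup inside $[G,G]$ and derives a contradiction from $\det(-I_V) = (-1)^n = -1$ versus commutators having determinant $1$. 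You instead apply your parity lemma to each layer $O_p(G)$ of the Fitting subgroup --- which is why your lemma must cover non-abelian $p$-groups, hence your use of $p$-power constituent dimensions and the fact that groups of odd order have no nontrivial self-dual irreducible representations --- concluding $F(G) = \langle -I_V\rangle$ and then invoking $C_G(F(G)) \leq F(G)$ for solvable groups. Both finishes are valid: Gow's determinant trick buys the paper a weaker lemma (abelian, hence cyclic, $N$ only, where the Galois-orbit analysis concerns just the eigenvalues of a single generator), while your route is more structural and self-contained, needing no citation of Zalesskii's result.

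Three minor points. Your dimension count $\dim_{\F} V = m\,|\mathcal{O}|\,p^{a}$ omits a possible Schur-index multiplicity, but this is harmless since every factor of the odd number $n$ is odd regardless. Your appeal to ``quasiprimitive equals primitive for solvable groups'' is never actually used: your case division is simply whether some normal subgroup acts non-homogeneously or not, so you can drop that citation entirely. Finally, your claim that the central characters of the constituents are nontrivial (and of odd order $>1$ when $p$ is odd) deserves a word of justification: Galois conjugation preserves kernels, so all constituents of the faithful module $W$ share the same kernel, which must therefore be trivial.
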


\begin{remark}
The assumptions in Theorem \ref{thm:mainthm} are necessary. If $n$ is even, for any prime power $q > 3$ the orthogonal group $O_2^{-}(q)$ is solvable, irreducible, and primitive.

Furthermore, if $n$ is odd and $G \leq \GL(V)$ is finite irreducible solvable, we need to assume that $G$ is contained in an orthogonal group for Theorem \ref{thm:mainthm} to hold. For example, the semilinear group $\GML_1(q^n)$ is a solvable irreducible primitive subgroup of $\GL_n(q)$ for any prime power $q$.

The assumption that $G$ is solvable is also necessary. For example, the orthogonal group $O_3(q)$ is irreducible and primitive for any odd prime power $q > 3$. 
\end{remark}

\begin{remark}
We have assumed that $\chr \F \neq 2$. In the case where $\chr \F = 2$ and $n$ is odd, the radical $V^\perp$ of $b$ is non-zero, so there are no irreducible subgroups in $O(V,Q)$.
\end{remark}

In $O(V,Q)$, the stabilizer of an orthogonal decomposition $V = W_1 \perp \cdots \perp W_n$ is a wreath product $O_1(\F) \wr S_n$, which after a choice of basis consists of monomial matrices with non-zero entries in $\pm 1$. Theorem \ref{thm:mainthm} states that for $n$ odd any finite irreducible solvable subgroup embeds into $O_1(\F) \wr S_n$, and as a corollary we can classify the maximal finite irreducible solvable subgroups of $O(V,Q)$. This result fits into the general problem of classifying maximal irreducible solvable subgroups of linear groups, as studied for example in \cite[Chapter 5]{SuprunenkoBook}. Similar themes are also found in the classification of maximal subgroups of classical groups, for example as in \cite{Aschbacher1984}, \cite{KleidmanLiebeck}, and \cite{BrayHoltRoneyDougal2013}.

\begin{seur}\label{cor:cor1}
Suppose that $n$ is odd. Let $G$ be maximal among the finite irreducible solvable subgroups of $O(V,Q)$. Then $G$ is conjugate to $O_1(\F) \wr K$ for some maximal transitive solvable $K \leq S_n$.
\end{seur}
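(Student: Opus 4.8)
The plan is to combine Theorem \ref{thm:mainthm} with the observation that, for the monomial group $W := O_1(\F)\wr S_n$ stabilizing an orthogonal decomposition of $V$ into $1$-spaces, the irreducible solvable subgroups containing the full sign group correspond, under the natural projection to $S_n$, exactly to the transitive solvable subgroups. First I would apply Theorem \ref{thm:mainthm} to obtain $V = W_1\perp\cdots\perp W_n$ with $\dim W_i = 1$ and $G$ acting on $\{W_1,\dots,W_n\}$, so that $G \le W$, the stabilizer of this decomposition in $O(V,Q)$. Let $\pi : W \to S_n$ be the permutation action on the blocks, with kernel the base group $B = O_1(\F)^n \cong (\Z/2\Z)^n$, and set $K := \pi(G)$. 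Since $G$ is solvable, so is its quotient $K$. Since $G$ is irreducible, $K$ must be transitive: otherwise the sum of the $W_i$ over a proper orbit of $G$ on $\{W_1,\dots,W_n\}$ would be a proper nonzero $G$-invariant subspace. Transitivity of $K$ also forces the restrictions $Q|_{W_i}$ to be pairwise isometric (some $g\in G$ carries $W_i$ isometrically onto $W_j$), so after rescaling the basis we may take $Q = a\sum x_i^2$ and $W$ is genuinely the full wreath product $O_1(\F)\wr S_n$.

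The key fact I would then establish is that $H := \pi^{-1}(K) = O_1(\F)\wr K$ is itself finite, solvable, and irreducible. Finiteness is clear, and solvability follows since $B$ is abelian and normal in $H$ with solvable quotient $K$. For irreducibility I would use $\chr\F\neq 2$: the commuting involutions generating $B$ are simultaneously diagonalized in the chosen basis, and their common eigenspaces are precisely the $1$-spaces $W_i$, affording $n$ distinct characters $B \to \{\pm 1\}$. Hence every $B$-invariant subspace is a sum of the $W_i$, and any $H$-invariant subspace corresponds to a $K$-invariant subset of $\{1,\dots,n\}$; transitivity of $K$ then leaves only $0$ and $V$, so $H$ is irreducible. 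This is the step I expect to carry the real content, since it is exactly where the parity and characteristic hypotheses enter and where the dictionary between invariant subspaces and invariant index subsets is set up.

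Finally I would transfer the maximality hypothesis across this dictionary. Since $G \le H$ and $H$ is a finite irreducible solvable subgroup of $O(V,Q)$, maximality of $G$ yields $G = H = O_1(\F)\wr K$. It remains to check that $K$ is maximal among the transitive solvable subgroups of $S_n$: if $K < K' \le S_n$ with $K'$ transitive and solvable, then the argument of the previous paragraph shows $O_1(\F)\wr K'$ is again finite, irreducible, and solvable, and it strictly contains $G$, contradicting the maximality of $G$. Therefore $G$ is conjugate to $O_1(\F)\wr K$ for some maximal transitive solvable $K \le S_n$, as claimed. The only genuine obstacle is the irreducibility criterion for the wreath product; everything else is bookkeeping with the projection $\pi$ and a routine back-and-forth between the maximality of $G$ and that of $K$.
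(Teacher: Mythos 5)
Your proposal is correct and follows essentially the same route as the paper: apply Theorem \ref{thm:mainthm}, project $G$ to $S_n$ to obtain a transitive solvable $K$, and then use maximality of $G$ twice to get $G = O_1(\F) \wr K$ and maximality of $K$. The only difference is one of detail: the paper treats as implicit the facts you spell out explicitly, namely that transitivity forces the $1$-spaces to be pairwise isometric (so the stabilizer really is the full wreath product) and that $O_1(\F) \wr K$ is itself finite, solvable, and irreducible via the eigenspace dictionary for the base group $B$; these verifications are sound and are exactly what the paper's phrase ``by maximality of $G$'' relies on.
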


\begin{seur}\label{cor:cor2}
Suppose that $n$ is odd, and let $K \leq S_n$ be maximal transitive solvable. Then $O_1(\F) \wr K$ is maximal among the finite irreducible solvable subgroups of $O(V,Q)$.
\end{seur}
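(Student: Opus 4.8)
The plan is to show that $G_0 := O_1(\F) \wr K$ is itself a finite irreducible solvable subgroup and that any such subgroup $H$ containing it must equal it. I realize $G_0$ as the stabilizer in $O(V,Q)$ of a fixed orthogonal decomposition $V = W_1 \perp \cdots \perp W_n$ into $1$-spaces, where the permutations induced on $\{W_1, \ldots, W_n\}$ are required to lie in $K$. Write $D \cong O_1(\F)^n$ for its base group, the group of sign changes of the coordinates; note that $D$ is generated by the reflections $r_1, \ldots, r_n$, where $r_i$ acts as $-1$ on the anisotropic line $W_i$ and as the identity on $W_i^\perp$. The common eigenspaces of $D$ are exactly the lines $W_i$, so any subspace invariant under $G_0$ is a sum of some of the $W_i$; since $K$ is transitive this forces $G_0$ to be irreducible, and $G_0$ is clearly finite and solvable (with abelian normal subgroup $D$ and solvable quotient $K$). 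Thus $G_0$ is a legitimate member of the poset, and it remains to prove maximality.

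So let $H$ be a finite irreducible solvable subgroup with $G_0 \le H$. By Theorem \ref{thm:mainthm} applied to $H$, fix an $H$-invariant orthogonal decomposition $V = U_1 \perp \cdots \perp U_n$ into $1$-spaces. The key step is to show that this decomposition coincides with $W_1 \perp \cdots \perp W_n$. Since $G_0 \le H$, the group $G_0$ permutes $\{U_1, \ldots, U_n\}$, giving a homomorphism $\phi \colon G_0 \to \Sym(\{U_1, \ldots, U_n\}) \cong S_n$ whose image is transitive because $G_0$ is irreducible. I would first observe that each reflection $r_i$ maps under $\phi$ to either the identity or a single transposition: indeed $r_i^2 = 1$, and counting the $(-1)$-eigenspace (which is one-dimensional, as $r_i$ is a reflection, each transposed pair $U_a \leftrightarrow U_b$ and each line on which $r_i$ acts as $-1$ contributing exactly one $-1$) shows that $r_i$ can move at most one pair $U_a \leftrightarrow U_b$ and, if it does, acts trivially on the remaining lines. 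Consequently $\phi(D)$ is an elementary abelian $2$-group generated by pairwise disjoint transpositions, and $\phi(D) \trianglelefteq \phi(G_0)$ since $D \trianglelefteq G_0$.

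Now I would invoke the standard fact that the orbits of a normal subgroup of a transitive permutation group form a block system and hence all have the same size. The orbits of $\phi(D)$ have size $1$ (fixed lines) or $2$ (transposed pairs), so they must all have size $1$ or all have size $2$; the latter is impossible because $n$ is odd. Therefore $\phi(D) = 1$, i.e. $D$ fixes each $U_i$, so $D$ is contained in the base group $D' \cong O_1(\F)^n$ of the decomposition $U_1 \perp \cdots \perp U_n$. As $|D| = |D'| = 2^n$, this gives $D = D'$, and since $\{W_i\}$ and $\{U_i\}$ are both the set of common eigenlines of $D = D'$ we conclude $\{W_1, \ldots, W_n\} = \{U_1, \ldots, U_n\}$. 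In particular $H$ stabilizes the original decomposition, so $H \le M := O_1(\F) \wr S_n$, the full stabilizer of $W_1 \perp \cdots \perp W_n$.

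Finally, $D \trianglelefteq M$ and $D \le G_0 \le H \le M$, so passing to $M/D \cong S_n$ yields $K \le H/D \le S_n$. Here $H/D$ is solvable, being a quotient of $H$, and transitive, since $H$ is irreducible and contains $D$ (whose invariant subspaces are exactly the coordinate subspaces). By maximality of $K$ among the transitive solvable subgroups of $S_n$ we get $H/D = K$, whence $H = G_0$, as desired. I expect the main obstacle to be the rigidity established in the middle paragraphs: the decompositions $\{W_i\}$ and $\{U_i\}$ a priori bear no relation, and it is precisely the oddness of $n$ (together with $\chr \F \neq 2$, used in the eigenvalue computation for the transposed pairs) that forces them to coincide.
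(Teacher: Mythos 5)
Your proof is correct, and it differs from the paper's in a genuine way at the crucial step. Both arguments share the same skeleton: compare the defining decomposition $\{W_1, \ldots, W_n\}$ with a decomposition preserved by the overgroup, show the two coincide, and then transfer maximality of $K$ through the quotient by the base group. The difference lies in how the coincidence is established. The paper argues by contradiction via Corollary \ref{cor:cor1}, placing a hypothetical proper overgroup inside a maximal one of the form $O_1(\F) \wr K'$, and then cites \cite[Theorem 1.1]{KorhonenLi2021} --- a general rigidity theorem for systems of imprimitivity of wreath products --- to conclude that the two decompositions agree. You instead take an arbitrary finite irreducible solvable overgroup $H$, apply Theorem \ref{thm:mainthm} to it directly, and prove the rigidity by hand: each reflection $r_i$ in the base group $D$ induces the identity or a single transposition on $\{U_1, \ldots, U_n\}$ (by counting the $(-1)$-eigenspace, which uses $\chr \F \neq 2$); the orbits of $\phi(D) \trianglelefteq \phi(G_0)$ are blocks for a transitive group, hence of common size $1$ or $2$; and oddness of $n$ forces size $1$, giving $D = D'$ by an order count and then equality of the two decompositions as the sets of common eigenlines of $D = D'$. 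In effect you have reproved, in the special case needed here, the result the paper cites. The paper's route buys brevity and rests on a theorem valid in greater generality (blocks of any dimension); yours buys a self-contained elementary argument that makes visible exactly where the hypotheses $n$ odd and $\chr \F \neq 2$ enter, avoids the (mild, but unstated) point that a non-maximal subgroup must lie inside a maximal one of the wreath-product form, and also checks explicitly that $O_1(\F) \wr K$ is itself finite, irreducible, and solvable, which the paper leaves implicit.
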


\section{Proofs}

\begin{lemma}\label{lemma:mainlemma}
Suppose that $n > 1$ is odd. Let $G \leq O(V, Q)$ be finite irreducible solvable. Then there exists an orthogonal decomposition $$V = W_1 \perp \dots \perp W_k$$ such that $k > 1$ and $G$ acts on $\{W_1, \ldots, W_k\}$.
\end{lemma}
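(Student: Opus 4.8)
The plan is to exploit the solvability of $G$ by looking at a minimal normal subgroup or, more fruitfully, the structure imposed by imprimitivity. The key dichotomy is whether $G$ acts primitively or imprimitively on $V$. If $G$ is imprimitive, then by definition $V$ decomposes as a direct sum $V = U_1 \oplus \cdots \oplus U_k$ with $k > 1$ that $G$ permutes, and the main task is to replace this with an \emph{orthogonal} decomposition; I would show that the natural thing is to pass to the system of imprimitivity and argue that, because $Q$ is $G$-invariant and nondegenerate, the summands can be grouped into mutually orthogonal pieces (the orthogonal complement of a $G$-invariant sum of blocks is again a sum of blocks), producing the required $W_1 \perp \cdots \perp W_k$ with $k > 1$.

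The harder case, and where I expect the real obstacle, is when $G$ acts primitively. Here I would bring in the standard structure theory for primitive solvable linear groups: a finite primitive solvable subgroup of $\GL(V)$ has a normal subgroup structure controlled by its generalized Fitting subgroup, and the representation is tensor-induced or tensor-decomposed in a tightly constrained way. Concretely, for a primitive solvable group the socle is abelian and acts homogeneously, and one obtains that $\dim V$ is forced to be a product of prime powers arising from the extraspecial-type normal subgroups and the semilinear action. The crucial arithmetic leverage is that $n$ is \emph{odd}: I would aim to show that primitivity together with the existence of a nondegenerate $G$-invariant quadratic form forces $\dim V$ to be even (or at least not odd), giving a contradiction. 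The point is that the relevant normal $r$-subgroups acting irreducibly contribute dimensions that are powers $r^m$, and the orthogonal (as opposed to symplectic or unitary) constraint, combined with oddness, rules these out.

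The main obstacle I anticipate is handling the semilinear and symplectic-type normal subgroups in the primitive case: a primitive solvable irreducible group can have a normal subgroup of symplectic type whose faithful irreducible representation has even dimension, and one must check that when $n$ is odd no such subgroup can be present, so that $G$ reduces to something acting via a field extension $\GML_1(\F')$, which is itself excluded from being orthogonally primitive. I would therefore structure the primitive case as: (i) invoke the classification of the Fitting subgroup of a primitive solvable linear group; (ii) show the central product of extraspecial groups contributes an even-dimensional tensor factor unless trivial, forcing those factors to be absent when $n$ is odd; (iii) reduce to the semilinear situation, and (iv) show that a one-dimensional semilinear group over a proper extension cannot preserve a nondegenerate quadratic form while remaining irreducible of odd degree, yielding the contradiction that no primitive case survives. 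Since Lemma \ref{lemma:mainlemma} only asks for \emph{some} nontrivial orthogonal decomposition (not into $1$-spaces), ruling out primitivity entirely suffices, and the imprimitive case then delivers the decomposition directly.
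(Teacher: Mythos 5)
Your proposal has two genuine gaps, one in each branch of your dichotomy. In the imprimitive case, the claim that the blocks "can be grouped into mutually orthogonal pieces" is false as stated, and the parenthetical justification is vacuous: since $G$ is irreducible, the only $G$-invariant sums of blocks are $0$ and $V$, so the assertion about orthogonal complements of invariant sums of blocks carries no content. Concretely, take $V$ a hyperbolic plane over $\F_q$ with $q \geq 7$ odd and $G = O_2^+(q)$: this group is solvable, irreducible, and imprimitive (it permutes the two isotropic lines $U_1, U_2$), yet these blocks admit no non-trivial orthogonal grouping ($b(U_1,U_2) \neq 0$ since each $U_i$ is totally singular), and indeed $G$ preserves no non-trivial orthogonal decomposition at all. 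This shows the oddness of $n$ must enter the imprimitive case, but your sketch never invokes it there. The correct mechanism is Clifford theory applied to a \emph{normal} subgroup together with Zalesski\u{\i}'s dichotomy: each homogeneous component is either non-degenerate and orthogonal to the others, or totally singular and paired with exactly one partner component; odd dimension rules out the paired case. This is exactly what the paper does, and it cannot be replaced by a formal grouping argument on an arbitrary system of imprimitivity.

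In the primitive case, step (ii) rests on a false premise: extraspecial $r$-groups for \emph{odd} primes $r$ have faithful irreducible representations of odd dimension $r^m$ (for instance $3^{1+2}$ acts irreducibly in dimension $3$, giving primitive solvable irreducible subgroups of $\GL_3(q)$ for suitable $q$), so symplectic-type normal subgroups are not excluded by odd dimension alone. What actually excludes them is the invariant quadratic form: the center of such a subgroup is a non-trivial abelian normal subgroup acting by scalars $\lambda I_V$, and invariance of $b$ forces $\lambda^2 = 1$. Your sketch never supplies this argument, and it is precisely the heart of the matter. The paper's proof makes this the whole story and avoids your dichotomy and the structure theory of primitive solvable linear groups entirely: assuming no non-trivial orthogonal decomposition exists, it takes \emph{any} non-trivial abelian normal subgroup $N$, uses Clifford theory, Zalesski\u{\i}'s dichotomy and oddness to conclude $V \downarrow N$ is homogeneous with $N$ cyclic, then a Galois/eigenvalue argument with the form to conclude $N = \langle -I_V \rangle$, and finally Gow's determinant trick: taking $N \leq [G,G]$ gives a contradiction because $\det(-I_V) = (-1)^n = -1$ while commutators have determinant $1$. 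Any repair of your primitive case would essentially reproduce this argument, at which point the dichotomy and the classification machinery become unnecessary.
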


\begin{proof}
For the sake of contradiction, suppose that $G$ preserves no such orthogonal decomposition. 

Let $N \neq 1$ be an abelian normal subgroup of $G$. By Clifford's theorem $V \downarrow N$ is completely reducible and $G$ acts on the homogeneous components of $V \downarrow N$. Since $G$ preserves no non-trivial orthogonal decomposition of $V$, by Clifford theory \cite[Proposition 5]{Zalesski1971} one of the following holds:

	\begin{itemize}
		\item $V \downarrow N$ is homogeneous.
		\item $V \downarrow N = W \oplus W^*$, where $W$ is a homogeneous $\F[N]$-module.
	\end{itemize}

The second case is not applicable since $n$ is odd, so $V \downarrow N$ must be homogeneous. A finite abelian group with a faithful irreducible representation is cyclic, so $N$ must be cyclic.

Let $p = \chr \F$. We claim that $\gcd(p,|N|) = 1$ if $p > 0$. To this end, let $P$ be a Sylow $p$-subgroup of $N$. Then $P$ is normal in $G$, so $G$ acts on the fixed point space $V^P$. Because $P$ is a $p$-group we have $V^P \neq 0$, so $V^P = V$ by irreducibility of $G$, in which case $P$ is trivial. Hence $\gcd(p, |N|) = 1$.

Choose a generator $f$ of $N$. Let $\K/\F$ be the splitting field of the representation of $N$ on $V$, in other words, a splitting field for the characteristic polynomial of $f$. Note that since $\gcd(p, |f|) = 1$, the extension $\K/\F$ is Galois.

Denote $V' := \K \otimes_{\F} V$. We identify $V$ as a $\F$-subspace of $V'$ via $v \mapsto 1 \otimes v$. Then $\GL(V) \leq \GL(V')$, and by extending the bilinear form $b$ linearly to $V'$, we have $O(V,Q) \leq O(V',Q)$. 

Because $\gcd(p, |f|) = 1$, the action of $f$ on $V'$ is diagonalizable, so $V'$ is a direct sum of $f$-eigenspaces. For $\alpha \in \K$, denote by $W_{\alpha}$ the $f$-eigenspace corresponding to $\alpha$. To each element $\sigma \in \Gal(\K/\F)$ of the Galois group, we have a semilinear map $s_{\sigma}: V' \rightarrow V'$ defined by $s_{\sigma} := \sigma \otimes I_V$. Then $s_{\sigma}(W_{\alpha}) = W_{\sigma(\alpha)}$, so in particular $\dim_{\K} W_{\sigma(\alpha)} = \dim_{\K} W_{\alpha}$ for all $\sigma \in \Gal(\K/\F)$ and $\alpha \in \K$.

Denote $$Q_{\alpha} = \sum_{\sigma \in \Gal(\K/\F)} W_{\sigma(\alpha)}$$ for all $\alpha \in \K$. Since $Q_{\alpha}$ is invariant under the action of the Galois group, we have $Q_{\alpha} = \K \otimes_{\F} Z_{\alpha}$ for some $f$-invariant subspace $Z_{\alpha}$ of $V$ \cite[V, Hilfssatz 13.2]{HuppertBook}. 

We have a direct sum decomposition $V' = \sum_{\alpha \in \K} Q_{\alpha}$. Since $V \downarrow N$ is homogeneous, the restriction $V' \downarrow N$ is the sum of a homogeneous $\K[N]$-module and its conjugates under the action of $\Gal(\K/\F)$ \cite[V, Satz 13.3]{HuppertBook}. Thus $V' = Q_{\alpha}$ and $V = Z_{\alpha}$ for some $\alpha \in \K$, so the eigenvalues of $f$ on $V'$ are conjugate under the action of $\Gal(\K/\F)$.

The bilinear form $b$ on $V'$ is non-degenerate, so there exists some $\beta \in \K$ such that $b(W_{\alpha}, W_{\beta}) \neq 0$. Because $b$ is $f$-invariant, we have $\alpha\beta = 1$. If $\alpha \neq \beta$, then since $\Gal(\K/\F)$ acts on pairs $\{\alpha, \beta\}$ with $\alpha\beta = 1$, we have $$V' = Q_{\alpha} = W_{\alpha_1} \oplus W_{\alpha_1^{-1}} \oplus \cdots \oplus W_{\alpha_t} \oplus W_{\alpha_t^{-1}}$$ for some $\alpha_1$, $\ldots$, $\alpha_t$ with $\alpha_i \neq \alpha_i^{-1}$ for all $1 \leq i \leq t$. But since $\dim_{\K} W_{\alpha} = \dim_{\K} W_{\sigma(\alpha)}$ for all $\sigma \in \Gal(\K/\F)$, it follows that $n = \dim_{\K}(V)$ is even, which is a contradiction. Therefore $\alpha = \beta$, so $\alpha^2 = 1$, which implies $\alpha = -1$ since $N$ is non-trivial. 

Thus we have proven that if $N$ is a nontrivial abelian normal subgroup of $G$, then $|N| = 2$ and $N = \langle -I_V \rangle$. In particular $G$ must be non-abelian, as otherwise $G = N$ and $G$ cannot be irreducible since $n > 1$. 

We can now finish the proof similarly to \cite{Gow1975}. Because $G$ is non-abelian, there is a non-trivial abelian normal subgroup $N$ that is contained in $[G,G]$. This is a contradiction, because commutators have determinant $1$, while $\det(-I_V) = (-1)^n = -1$ when $n$ is odd.\end{proof}

\begin{proof}[Proof of Theorem \ref{thm:mainthm}]
For $n = 1$ the result is immediate. We suppose then that $n > 1$ and proceed by induction on $n$. By Lemma \ref{lemma:mainlemma}, there exists an orthogonal decomposition $$V = Z_1 \perp \cdots \perp Z_k,$$ where $1 < k \leq n$ and $G$ acts on $\{Z_1, \ldots, Z_k\}$. Then $n = dk$, where $d = \dim_{\F} Z_i$ for all $1 \leq i \leq k$. 

Since $G$ is irreducible, the action of $N_G(Z_1)$ on $Z_1$ must be irreducible \cite[Theorem 15.1]{SuprunenkoBook}. Thus it follows by induction that there is an orthogonal decomposition $$Z_1 = W_1^{(1)} \perp \cdots \perp W_d^{(1)}$$ such that $N_G(Z_1)$ acts on $\{W_1^{(1)}, \ldots, W_d^{(1)}\}$, and $\dim W_j^{(1)} = 1$ for all $1 \leq j \leq d$. 

Because $G$ is irreducible, it acts transitively on $\{Z_1, \ldots, Z_k\}$, so for all $1 < i \leq k$ there exists $g_i \in G$ such that $g_i(Z_1) = Z_i$. We define $W_j^{(i)} := g_i(W_j^{(1)})$ for all $1 < i \leq k$ and $1 \leq j \leq d$. Then it is straightforward to deduce (see for example \cite[Lemma 15.2]{SuprunenkoBook}) that $$Z_i = W_1^{(i)} \perp \cdots \perp W_d^{(i)}$$ for all $1 \leq i \leq k$ and $G$ acts on $\{W_1^{(1)}, \ldots, W_d^{(1)}, \ldots, W_1^{(k)}, \ldots, W_d^{(k)}\}$. This completes the proof of the theorem.\end{proof}

\begin{proof}[Proof of Corollary \ref{cor:cor1}]Let $G \leq O(V,Q)$ be maximal among the finite irreducible solvable subgroups of $O(V,Q)$. It follows from Theorem \ref{thm:mainthm} that $G$ is conjugate to a subgroup of $O_1(\F) \wr S_n$, so suppose that $G \leq O_1(\F) \wr S_n$. Then $G \leq O_1(\F) \wr K$, where $K$ is the image of $G$ in $S_n$. Since $G$ is irreducible and solvable, the group $K$ must be transitive and solvable. By maximality of $G$, we have $G = O_1(\F) \wr K$ and $K$ is maximal transitive solvable.\end{proof}

\begin{proof}[Proof of Corollary \ref{cor:cor2}]Denote $G = O_1(\F) \wr K$, and let $$V = W_1 \perp \cdots \perp W_n$$ be the orthogonal decomposition defining $G$. If $G$ is not maximal solvable, by Corollary \ref{cor:cor1} there exists $G' \leq O(V,Q)$ of the form $G' = O_1(\F) \wr K'$ with $K' \leq S_n$ maximal transitive solvable and $G \lneqq G'$. Let $$V = Z_1 \perp \cdots \perp Z_n$$ be the orthogonal decomposition defining $G'$. Since $G$ acts on $\{Z_1, \ldots, Z_n\}$ and $n$ is odd, it follows from \cite[Theorem 1.1]{KorhonenLi2021} that $\{W_1, \ldots, W_n\} = \{Z_1, \ldots, Z_n\}$. Thus we have an embedding $K \leq K'$, so $K = K'$ since $K$ is maximal solvable. But then $G = G'$, contradicting our assumption $G \lneqq G'$.\end{proof}

\end{document}